\newcommand{\COMMENT}[1]{\ifdraft{\begin{@empty}\color{olive}#1\end{@empty}}{}}
\tikzset{/Dynkin diagram/fold style/.style={white}}
\crefname{maintheorem}{Theorem}{Theorems}
\newtheorem{mainquestion}{Question}\crefname{mainquestion}{Question}{Questions}
\newtheorem{mainfact}{Fact}\crefname{mainfact}{Fact}{Facts}
\newtheorem{theorem}{Theorem}[section]
\newtheorem{prop}[theorem]{Proposition}
\newtheorem{observation}[theorem]{Observation}\crefname{observation}{Observation}{Observations}
\theoremstyle{definition}
\theoremstyle{remark}
\newtheorem{idea}{Idea}\crefname{idea}{Idea}{Ideas}
\author{Marcus Zibrowius}
\title{Dual involutions in finite Coxeter groups}
\begin{document}
\maketitle

\newcommand{\ZZ}{\mathbb{Z}}
\newcommand{\NN}{\mathbb{N}}
\newcommand{\RR}{\mathbb{R}}
\newcommand{\cardinality}[1]{\left|#1\right|}
\newcommand{\id}{\mathrm{id}}
%%%%%%%%%%%%%%%%%%%%%%%%%%%%%%%%%%%%%%%%%%%%%%%%%%%%%%%%%%%%%%%%%%%%%%%%%%%%%%%%%%%%%%%%%%%%%%%%%%%%

\ifdraft{
  \newcommand{\explanation}[2]{\noindent\parbox[t]{\textwidth}{\parbox[t]{4em}{\(#1\)}\parbox[t]{\textwidth-3cm}{#2}}\\[6pt]}
}{
  \newcommand{\explanation}[2]{}
}
\newcommand{\simpleRoots}{\Sigma}
\explanation{\simpleRoots}{set of simple roots}
\newcommand{\folded}[1]{#1^\sigma}
\explanation{\folded{\simpleRoots}}{The root system obtained from \(\Sigma\) by folding along the automorphism \(\sigma\)}
\newcommand{\Weyl}{W}
\explanation{\Weyl}{Weyl group, grenerated by reflections in the simple roots}
\newcommand{\Weylparabolic}[1]{\Weyl_{#1}}
\explanation{\Weylparabolic{I}}{parabolic subgroup of \(\Weyl\) corresponding to \(I\subset \simpleRoots\)}
\newcommand{\longest}{{w_o}}
\explanation{\longest}{longest element of the Weyl group \(\Weyl\)}
\newcommand{\Weylo}{\Weyl_o}
\explanation{\Weylo}{subgroup of elements of Weyl group commuting with \(\longest\)}
\newcommand{\Weylfolded}{\Weyl^\sigma}
\explanation{\Weylfolded}{``folded'' subgroup of the Wely group; the subgroup fixed under conjugation with \(\sigma\)}
\newcommand{\longestparabolic}[1]{{w_{#1}}}
\explanation{\longestparabolic{I}}{longest element of \(\Weylparabolic{I}\) (often viewed as an element of \(\Weyl\))}
\newcommand{\standardinvolution}[1]{{c_{#1}}}
\explanation{\longestparabolic{I}}{standard involution associated with \(I\subset \simpleRoots\)}
\newcommand{\inv}[2]{c_{#1}^{#2}}
\explanation{\inv{k,l}{}}{certain involutions in Weyl groups that I define}
\newcommand{\innerproduct}[2]{\langle #1,#2\rangle}
\explanation{\innerproduct{\alpha}{\beta}}{inner product on ambient vector space}
\newcommand{\plusdim}{\dim^+}
\newcommand{\minusdim}{\dim^-}
\newcommand{\plusminusdim}{\dim^\pm}
\explanation{\minusdim(\tau)}{the dimension of \(\pm 1\)-Eigenspace of \(\tau\), for any involution \(\tau\)}

%%% Local Variables:
%%% mode: latex
%%% TeX-master: "involutions"
%%% End:

\begin{abstract}
  There is a well-known classification of conjugacy classes of involutions in finite Coxeter groups, in terms of subsets of nodes of their Coxeter graphs.
  In many cases, the product of an involution with the longest element \(\longest\) is again an involution.
  We identify the conjugacy class of this product involution in terms of said classification.
\end{abstract}

Consider a finite Coxeter group \(\Weyl\), and suppose first for simplicity that the longest element \(\longest\in\Weyl\) is central.  The question we will address is:  given an involution \(w\in \Weyl\), what is the conjugacy class of the involution \(\longest w\)?

To make this question precise, let us fix a finite Coxeter graph \(\simpleRoots\), and consider the
associated finite Coxeter group \(\Weyl(\simpleRoots)\). Recall that we can realize \(\Weyl(\simpleRoots)\) as the Euclidean reflection group determined by a root system\footnote{We consider root systems to be essential and reduced by definition, but not necessarily crystallographic.}
whose undirected Dynkin diagram is precisely the Coxeter graph \(\simpleRoots\).  Our initial assumption on \(\longest\) holds in the following cases:
\begin{mainfact}[{\cite[\S\,27--2]{kane}}]\label{fact:central}
  The longest element \(\longest\in\Weyl(\simpleRoots)\) is central if and only if it acts as \(-\id\) when \(\Weyl(\simpleRoots)\) is viewed as a Euclidean reflection group, if and only if
  \(\simpleRoots\) is a disjoint union of graphs of the types \(A_1\), \(BC_n\), \(D_{2m}\), \(E_7\), \(E_8\), \(F_4\), \(G_2(2m)\), \(H_3\) and \(H_4\).
\end{mainfact}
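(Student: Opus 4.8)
\emph{Proof strategy.} First I would settle the equivalence between centrality of \(\longest\) and \(\longest=-\id\). One direction is trivial: \(-\id\) lies in the centre of the orthogonal group \(O(V)\) of the ambient space, and \(\Weyl\subseteq O(V)\). For the other, I would invoke the standard characterisation of \(\longest\) as the unique element of \(\Weyl\) carrying every positive root to a negative root; in particular \(\longest\) sends the set of simple roots bijectively onto its negative, so \(-\longest\) stabilises the simple roots and induces a permutation \(\sigma\) of the nodes of \(\simpleRoots\). Being induced by an element of \(O(V)\), this \(\sigma\) preserves all pairwise inner products of simple roots, hence is a (length-preserving) automorphism of the Coxeter graph \(\simpleRoots\). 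Since \(\longest s_\alpha\longest^{-1}=s_{\longest\alpha}=s_{\sigma(\alpha)}\) for each simple root \(\alpha\), conjugation by \(\longest\) permutes the simple reflections according to \(\sigma\); if \(\longest\) is central this forces \(\sigma=\id\), so \(-\longest\) fixes every simple root, and since these span \(V\) we get \(-\longest=\id\). The same uniqueness characterisation also shows \(\longest\) is central if and only if \(-\id\in\Weyl\): when \(-\id\) lies in \(\Weyl\) it carries every positive root to a negative one and hence equals \(\longest\).

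It then remains to determine which Coxeter graphs \(\simpleRoots\) satisfy \(-\id\in\Weyl(\simpleRoots)\). I would decompose \(\simpleRoots=\bigsqcup_i\simpleRoots_i\) into connected components, so that \(\Weyl(\simpleRoots)=\prod_i\Weyl(\simpleRoots_i)\) acts on \(V=\bigoplus_iV_i\) and \(-\id_V\) lies in the product exactly when \(-\id_{V_i}\in\Weyl(\simpleRoots_i)\) for all \(i\); thus it suffices to run through the connected finite types \(A_n\), \(BC_n\), \(D_n\), \(E_6\), \(E_7\), \(E_8\), \(F_4\), \(G_2(m)\), \(H_3\), \(H_4\) and decide in each whether \(-\id\in\Weyl\), equivalently (by the previous paragraph) whether \(\sigma=-\longest\) is trivial. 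For \(A_1\), \(BC_n\) with \(n\ge3\), \(E_7\), \(E_8\), \(H_3\), \(H_4\) the Coxeter graph has no nontrivial automorphism, so \(\sigma=\id\) and \(-\id\in\Weyl\). The two remaining graphs with a flip, \(BC_2\) and \(F_4\), I would handle by a direct check: \(BC_2=G_2(4)\) is dihedral of order \(8\) and contains the rotation \(-\id\), while for \(F_4\) one verifies on the \(48\)-element root system that \(\longest=-\id\). For the rest \(\sigma\) is nontrivial and must be computed: in \(A_n\) (\(n\ge2\)), modelling \(\Weyl=S_{n+1}\) on \(\{x\in\RR^{n+1}:\sum x_i=0\}\) with \(\longest\) the reversal \(x_i\mapsto x_{n+2-i}\) shows \(\sigma\) is the order-two flip of the path, so \(-\id\notin\Weyl\); in \(D_n\), \(\Weyl\) is the group of signed permutations of \(\RR^n\) with an even number of sign changes, so \(-\id\) (using \(n\) sign changes) lies in \(\Weyl\) iff \(n\) is even; for \(G_2(m)=I_2(m)\), \(\Weyl\) is dihedral of order \(2m\) and \(-\id\) is the rotation through \(\pi\), lying in \(\Weyl\) iff \(m\) is even; and in \(E_6\), \(-\longest\) is the unique nontrivial diagram automorphism, so \(-\id\notin\Weyl\). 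Assembling the affirmative cases yields exactly \(A_1\), \(BC_n\), \(D_{2m}\), \(E_7\), \(E_8\), \(F_4\), \(G_2(2m)\), \(H_3\), \(H_4\).

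The hard part is the case analysis, and within it the only non-mechanical input: one must show that \(-\longest\) realises the \emph{nontrivial} diagram automorphism---not merely that a nontrivial one exists---in types \(A_n\) (\(n\ge2\)), \(D_{2m+1}\) and above all \(E_6\). For \(A_n\) and \(D_n\) the explicit (signed) permutation models make this routine; for \(E_6\) I would most likely extract it from an explicit description of the root system or simply cite it, cf.\ \cite{kane}.
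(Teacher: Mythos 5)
Your argument is correct, but it is worth noting that the paper does not prove this statement at all: it is quoted as a known fact with a citation to Kane [\S\,27--2], so there is no internal proof to compare against. What you give is essentially the standard textbook derivation (and close to how Kane and Bourbaki treat it): the equivalence ``\(\longest\) central \(\iff\) \(\longest=-\id\) \(\iff\) \(-\id\in\Weyl\)'' via the observation that \(-\longest\) induces a diagram automorphism \(\sigma\) and that conjugation by \(\longest\) permutes the simple reflections according to \(\sigma\); then reduction to connected components; then a case-by-case check using explicit models (\(S_{n+1}\) for \(A_n\), signed permutations for \(BC_n\) and \(D_n\), dihedral groups for \(G_2(m)\)) and the triviality of the automorphism group of the Coxeter graph in types \(A_1\), \(BC_{n\geq 3}\), \(E_7\), \(E_8\), \(H_3\), \(H_4\). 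Your handling of \(D_n\) through the signed-permutation model is the right move, since the automorphism-group criterion alone would be inconclusive for \(D_4\) (triality). The one step you leave to a citation or an explicit check---that \(-\longest\) is the \emph{nontrivial} diagram automorphism of \(E_6\), equivalently \(-\id\notin\Weyl(E_6)\)---is genuinely the only non-mechanical point; if you want to avoid a raw computation on the \(72\) roots, you can instead invoke the criterion that \(-\id\in\Weyl\) if and only if all exponents are odd (equivalently all degrees even), which fails for \(E_6\) since its degrees are \(2,5,6,8,9,12\). With that supplied, your proof is complete and consistent with the cited fact.
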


There is a well-known classification of conjugacy classes of involutions in \(\Weyl(\simpleRoots)\), as follows.
By construction, each node of \(\simpleRoots\) corresponds to a specific (generating) involution.
More generally, given any subgraph \(I\subset \simpleRoots\), we have an associated involution \(\longestparabolic{I}\in\Weyl\): the longest element of the parabolic subgroup \(\Weylparabolic{I}\) generated by the involutions corresponding to the nodes of \(I\).  Such an involution is called \emph{standard} if \(\Weylparabolic{I}\cong \Weyl(I)\) satisifes the equivalent conditions of \cref{fact:central}, and in this case we write \(\standardinvolution{I} := \longestparabolic{I}\).

\begin{mainfact}[{\cite[\S\,27--4]{kane}}]\label{fact:conjugacy}
  Any involution in \(\Weyl(\simpleRoots)\) is conjugate to a standard involution \(\standardinvolution{I}\).
\end{mainfact}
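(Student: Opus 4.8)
The plan is to induct on the rank $\cardinality{\simpleRoots}$, following the classical strategy (in essence that of Richardson). When $\simpleRoots=\emptyset$ there is nothing to prove, the only involution being the identity $=\standardinvolution{\emptyset}$. So fix an involution $w\in\Weyl:=\Weyl(\simpleRoots)$, realized as acting on the ambient Euclidean space $V$ of the root system, and assume the statement for all Coxeter graphs of smaller rank. Since $w$ is an involution it is diagonalizable over $\RR$ with all eigenvalues in $\{\pm1\}$, so exactly one of two things happens: either $w$ has a nonzero fixed vector, or $w$ acts as $-\id$ on $V$.

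First suppose $w$ acts as $-\id$. Then $w$ sends every positive root to a negative root, so $w$ is the longest element $\longest$; in particular $\longest$ acts as $-\id$, so by \cref{fact:central} the graph $\simpleRoots$ is a disjoint union of the listed types. Hence $\standardinvolution{\simpleRoots}=\longestparabolic{\simpleRoots}=\longest=w$ is defined and standard, and we are done in this case. Now suppose instead that $w$ fixes some $x\in V\setminus\{0\}$, and let $S=\{g\in\Weyl : gx=x\}$ be the pointwise stabilizer of $x$. After conjugating $x$ into the closed fundamental chamber by some $g\in\Weyl$, one has $gSg^{-1}=\Weylparabolic{I}$ with $I=\{\alpha\in\simpleRoots : s_\alpha x'=x'\}$, where $x'=gx$; and $I\subsetneq\simpleRoots$, since essentiality of the root system rules out a nonzero vector fixed by all of $\Weyl$, so $S\neq\Weyl$. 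Replacing $w$ by $gwg^{-1}$, we may therefore assume $w\in\Weylparabolic{I}\cong\Weyl(I)$. As $\cardinality{I}<\cardinality{\simpleRoots}$, the inductive hypothesis applies inside $\Weyl(I)$: the involution $w$ is conjugate in $\Weylparabolic{I}$ --- and hence also in $\Weyl$ --- to a standard involution $\standardinvolution{J}$ with $J\subset I$. Since the defining condition on $J$ refers only to $\Weyl(J)$, this $\standardinvolution{J}$ is equally a standard involution of $\Weyl(\simpleRoots)$, and the induction is complete.

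The one step that is not pure bookkeeping --- the point I would single out as the crux --- is the structural input used above: that the pointwise stabilizer of a vector in a finite reflection group is a parabolic subgroup, i.e.\ is conjugate to a standard $\Weylparabolic{I}$. This is classical, following from the fact that the closed fundamental chamber is a strict fundamental domain for the action on $V$ together with the description of the stabilizers of its points, so in a write-up I would simply invoke it; the remaining manipulations with lengths and eigenspaces are elementary.
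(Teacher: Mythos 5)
Your argument is correct, but there is nothing in the paper to compare it with directly: the paper does not prove this statement at all, it simply imports it as a known fact from Kane \cite[\S\,27--4]{kane} (the classification itself goes back to Richardson and Springer). Your induction on the rank is exactly the Richardson-style proof: the dichotomy between \(w=-\id\) on \(V\) and \(w\) having a nonzero fixed vector is sound; in the first case \(w\) sends all positive roots to negative ones, hence equals \(\longest\), and \cref{fact:central} then makes \(\standardinvolution{\simpleRoots}=\longest\) a standard involution; in the second case the reduction to a proper parabolic \(\Weylparabolic{I}\) with \(I\subsetneq\simpleRoots\) is legitimate, with the strict inclusion correctly justified by essentiality (a vector in the closed chamber orthogonal to all simple roots is zero). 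The one nontrivial input, that the pointwise stabilizer of a vector is conjugate to a standard parabolic subgroup, is indeed classical (it follows from the chamber being a strict fundamental domain, or from Steinberg's fixed-point theorem), and it is reasonable to quote it. You also correctly note that standardness of \(\standardinvolution{J}\) only depends on \(\Weyl(J)\), so it passes from \(\Weylparabolic{I}\) to \(\Weyl(\simpleRoots)\), and conjugacy in \(\Weylparabolic{I}\) implies conjugacy in \(\Weyl(\simpleRoots)\). So your write-up supplies a self-contained proof where the paper chose a citation; what the citation buys the paper in addition is the companion statement (used right after \cref{fact:conjugacy}) that \(\standardinvolution{I}\sim\standardinvolution{J}\) exactly when \(I\) and \(J\) are \(\Weyl(\simpleRoots)\)-equivalent, which your induction does not address and which requires a separate argument.
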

Moreover, two standard involutions \(\standardinvolution{I}\) and \(\standardinvolution{J}\) are conjugate if and only if \(I\) and \(J\) are \(\Weyl(\simpleRoots)\)-equivalent, a condition that is decidable using a graphical calculus \cite[\S\,28--5]{kane}.  The precise form of our question, then, is:
\begin{mainquestion}\label{question1}
  Suppose \(\Weyl(\simpleRoots)\) satisfies one of the equivalent conditions of \cref{fact:central}.
  Given a standard involution \(\standardinvolution{I}\), for which subset(s) \(J\subset \simpleRoots\) is \(\longest\standardinvolution{I} \sim \standardinvolution{J}\)? Here, \(\sim\) denotes conjugacy in \(\Weyl(\simpleRoots)\).
\end{mainquestion}

More generally, without any assumption on \(\longest\), we can consider the subgroup \(\Weylo(\simpleRoots)\subset \Weyl(\simpleRoots)\) of those elements that commute with \(\longest\).
In general, not every involution in \(\Weylo(\simpleRoots)\) is conjugate in \(\Weylo(\simpleRoots)\) to a standard involution \(\standardinvolution{I}\).  However, it is still true that every involution in \(\Weylo(\simpleRoots)\) is conjugate in \(\Weylo(\simpleRoots)\) to one of the involutions \(\longestparabolic{I}\) defined by an arbitrary subgraph \(I \subset \simpleRoots\).  Multiplication by \(\longest\) defines an involution on the set of conjugacy classes of involutions in \(\Weylo(\simpleRoots)\), and the question becomes:

\begin{mainquestion}\label{question2}
  Given a subgraph \(I\subset \simpleRoots\) such that \(\longestparabolic{I}\in\Weylo\), for which subgraph(s) \(J\subset \simpleRoots\) is \(\longest \longestparabolic{I} \sim \longestparabolic{J}\)?  Here, \(\sim\) denotes conjugacy in \(\Weylo(\simpleRoots)\).
\end{mainquestion}

We provide complete results to this question, tabulated at the end of the paper.

\subsubsection*{Overview}
Arguably, the most straight-forward path from the above questions to the below results would be via elementary brute force computations.
However, this seems both tedious and unenlightening.  We leave it as an exercise for the reader to verify the results that we have tabulated for the dihedral groups \(\Weyl(G_2(n))\) in this way, and also our results for two specific involutions in \(\Weyl(D_{2k})\) (see details at the end of \cref{sec:folding}).   All other results can be obtained more elegantly using the following ideas, spelled out in more detail in the body of the paper.

\begin{idea}\label{idea:eigenspaces}
  View \(\Weyl(\simpleRoots)\) a Euclidean reflection group as above, and consider the dimensions of the eigenspaces of the involutions.
\end{idea}
For a standard involution \(\standardinvolution{I}\), the dimension of the \(-1\)-eigenspace is equal to \(\cardinality{I}\), the number of nodes of \(I\), and the dimension of the \(+1\)-eigenspace is equal to \(\cardinality{\simpleRoots}-\cardinality{I}\).  Multiplication with \(\longest = -1\) interchanges these two eigenspaces.  So any subgraph \(J\) in \cref{question1} must have \(\cardinality{\simpleRoots}-\cardinality{I}\) nodes.

In many cases, this alone determines the conjugacy class of \(\longest \standardinvolution{I}\).
For example, \cref{question1} can be completely answered for \(\Weyl(H_3)\) and \(\Weyl(H_4)\) in this way.
In other cases, this yields at least  partial information.  For example, in \(\Weyl(BC_5)\) we have two non-conjugate standard involutions defined by subgraphs with a single vertex, and two standard involutions defined by subgraphs with four vertices:
\begin{align*}
  c_{\{1\}}       & = c_{\dynkin C{*oooo}} & c_{\{5\}}       & = c_{\dynkin C{oooo*}} \\
  c_{\{1,3,4,5\}} & = c_{\dynkin C{*o***}}  & c_{\{2,3,4,5\}}   & = c_{\dynkin C{o****}}
\end{align*}
Multiplication with \(\longest\) must send each conjugacy class of an involution in the first line to a conjugacy class of an involution in the second line, but further analysis is needed to resolve the ambiguity.

\begin{idea}[sub-root systems]\label{example:sub-root-systems}
  Refine \cref{idea:eigenspaces} by considering not just the dimensions of eigenspaces but the closed sub-root systems they contain.
\end{idea}
For example, consider \(\Weyl(BC_5)\) as the Euclidean reflection group determined by the root system \(C_5\).
The fixed-point space of \(\standardinvolution{\{5\}}\) contains the sub-root system generated by the simple roots $\alpha_1,\alpha_2,\alpha_3$, a closed sub-root system of type $A_3$.  So the \(-1\)-eigenspace of (any conjugate of) \(-\standardinvolution{\{5\}}\) must also contain a sub-root system of type $A_3$.
Using the Borel--de Siebenthal classification of closed sub-root systems, we can therefore rule out one of the two candidate representatives of \(-\standardinvolution{\{5\}}\) from above, and find that \(-\standardinvolution{\{5\}} \sim \standardinvolution{\{2,3,4,5\}}\), resolving the ambiguity.
Similar arguments are used in \cref{sec:E7E8} to resolve all ambiguities for \(\Weyl(E_7)\) and \(\Weyl(E_8)\).

\begin{idea}\label{idea:folding}
  Refine \cref{idea:eigenspaces} using „folding“.
\end{idea}
Following Steinberg \cite{steinberg:endomorphisms}, we can “fold” the root system of type \(A_{10}\) onto \(C_{5}\) and obtain an embedding \(\iota\colon \Weyl(BC_5)\hookrightarrow\Weyl(A_{10})\). This embedding sends the longest element of \(\Weyl(BC_5)\) to the longest element of \(\Weyl(A_{10})\), and induces a bijection between the set of conjucagy classes of involutions in \(\Weyl(BC_5)\) and the set of conjugacy classes of involutions in \(\Weylo(A_{10})\).  We can therefore consider, for each involution \(c\in \Weyl(BC_5)\), not just the dimensions of the eigenspaces of \(c\) but also the dimensions of the eigenspaces of \(\iota c\).  This yields an alternative proof of the above claim that \(-\standardinvolution{\{5\}} \sim \standardinvolution{\{2,3,4,5\}}\).  This approach is expanded in \cref{sec:folding} to complete the results for the classical families \(A_n\), \(BC_n\), \(D_n\), and for \(F_4\) and \(E_6\).

\subsubsection*{Acknowledgements}
The author has previously discussed some of these ideas on mathoverflow \cite{mathoverflow:collected}.
The results presented are motivated by previous results on the real K-theory of certain homogeneous spaces \cite{hz:exterior}, which they make more explicit.

\section{Calculations}
In the following, we upgrade the Coxeter graph \(\Sigma\) to a Dynkin diagram without changing notation.  For most finite Coxeter graphs, there is a unique Dynkin diagram that has the same underlying Coxeter graph, but for the Coxeter graph of type \(BC_n\), we can choose between the Dynkin diagrams \(B_n\) and \(C_n\).  We view \(\Weyl(\Sigma)\) as the finite Euclidean reflection group determined by a root system of Dynkin type \(\Sigma\).  Thus, \(\Weyl(\Sigma)\) is a subgroup of the isometry group of a Euclidean vector space \((V, \innerproduct{-}{-})\).  Each node of of the Dynkin diagram \(\Sigma\) corresponds to a simple root in \(V\), and we use the same notation \(\simpleRoots \subset V\) for this set of roots.  Each node of \(\simpleRoots\) also corresponds to a generator of \(\Weyl(\Sigma)\), namely to the reflection in the hyperplane orthogonal to the corresponding root.

\subsection{Dimensions of eigenspaces}
For an arbitrary element \(w\in\Weyl(\simpleRoots)\), we denote by \(\plusminusdim(w)\) the dimensions of the \(\pm1\)-eigenspaces of \(w\).
Recall that \(\longest\) maps the set of simple roots \(\simpleRoots\subset V\) to the set \(-\simpleRoots\subset V\), so that \(-\longest\) is an automorphism of the set \(\simpleRoots\), and more precisely an automorphism of the Dynkin diagram.  More generally, any subset of roots \(I\subset \simpleRoots\) is permuted by the involution \(-\longestparabolic{I}\).

\begin{prop}\label{lem:minusdim}
  The dimensions of eigenspaces of various involutions can be obtained by counting orbits, as follows.
  \begin{enumerate}[(1)]
  \item
    For any set of simple roots \(I\subset\simpleRoots\),
    \begin{flalign*}
      &\hspace{4em}\minusdim(\longestparabolic{I}) = \cardinality{\{\text{orbits of } -\longestparabolic{I} \text{ on } I\}}&
    \end{flalign*}
  \item
    For any set of simple roots \(I\subset\simpleRoots\) such that \(-\longest I = I\), we have \(\longestparabolic{I} \in \Weylo(\simpleRoots)\), and
    \begin{flalign*}
      % \shortintertext{For \(\sigma=-\longest\), we therefore find:}
      &\hspace{4em}\minusdim(\longest \longestparabolic{I})
       = \cardinality{\{\text{non-trivial orbits of } \longest \longestparabolic{I} \text{ on } I \}} & \\
      &\hspace{10em} + \cardinality{\{\text{orbits of } -\longest \text{ on } \simpleRoots\setminus I\}}
    \end{flalign*}
  \item
    More generally, suppose \(\sigma\) is an involution isometry of \((V,\innerproduct{-}{-})\) such that \(\sigma(\simpleRoots) =  \simpleRoots\).  Assume further that \(\sigma(I) = I\), and that the restrictions of \(\sigma\) and \(-\longestparabolic{I}\) to \(I\) commute.  Then \(\sigma\) and \(-\longestparabolic{I}\) commute on all of \(V\), and
    \begin{flalign*}
      &\hspace{4em}\minusdim(\sigma(-\longestparabolic{I}))
      = \cardinality{\{\text{non-trivial orbits of } \sigma(-\longestparabolic{I}) \text{ on } I \}} & \\
      &\hspace{11em} + \cardinality{\{\text{orbits of } \sigma \text{ on } \simpleRoots\setminus I\}}
    \end{flalign*}
  \end{enumerate}
\end{prop}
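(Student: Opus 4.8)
The plan is to reduce all three parts to one elementary observation about linear involutions that permute a basis. Namely: if $\pi$ is a permutation with $\pi^2=\id$ of an index set $J$ and $U=\bigoplus_{j\in J}\RR e_j$, then the map $e_j\mapsto e_{\pi(j)}$ has $+1$-eigenspace of dimension $\cardinality{J/\pi}$ (one coordinate per orbit) and $-1$-eigenspace of dimension equal to the number of two-element $\pi$-orbits; for the map $e_j\mapsto -e_{\pi(j)}$ these two dimensions are interchanged (it is the negative of the previous one). This is checked in a line by block-decomposing $U$ according to the orbits of $\pi$.

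The second ingredient is the standard geometry of a parabolic. Writing $V_I=\operatorname{span}I$, each $s_i$ with $i\in I$ preserves $V_I$ and fixes $V_I^\perp$ pointwise, so $\Weylparabolic{I}$ does as well; hence $\longestparabolic{I}$ acts as the identity on $V_I^\perp$ and on $V_I$ sends $\alpha_i\mapsto-\alpha_{\tau(i)}$, where $\tau:=-\longestparabolic{I}|_I$ is the diagram involution of $I$ induced by $-\longestparabolic{I}$. Part (1) is then immediate: the $-1$-eigenspace of $\longestparabolic{I}$ is contained in $V_I$, and applying the observation to the map $\alpha_i\mapsto-\alpha_{\tau(i)}$ shows its dimension is $\cardinality{I/\tau}$, i.e. the number of orbits of $-\longestparabolic{I}$ on $I$.

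For part (3), I first upgrade ``commuting on $I$'' to ``commuting on $V$''. Since $\sigma$ is a linear isometry mapping the basis $\simpleRoots$ bijectively to itself, $\sigma$ is the permutation matrix of $\sigma|_\simpleRoots$; because $\sigma(I)=I$ it preserves $V_I$, and being an isometry it then also preserves $V_I^\perp$. On $V_I^\perp$ the map $-\longestparabolic{I}$ is $-\id$, which commutes with $\sigma$; on $V_I$ both $-\longestparabolic{I}$ and $\sigma$ are honest (unsigned) permutation matrices, of $\tau$ and of $\sigma|_I$ respectively, so they commute there exactly because the permutations do. Thus $g:=\sigma(-\longestparabolic{I})$ is a linear involution and $V=V_I\oplus V_I^\perp$ is $g$-stable. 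Now $\minusdim(g)=\dim\ker(g+1)=\dim\ker(\sigma\longestparabolic{I}-1)=\plusdim(\sigma\longestparabolic{I})$, and this dimension splits as the sum of its restrictions to $V_I$ and to $V_I^\perp$. On $V_I$ the map $\sigma\longestparabolic{I}$ sends $\alpha_i\mapsto-\alpha_{\mu(i)}$ with $\mu:=\sigma|_I\circ\tau=\sigma(-\longestparabolic{I})|_I$, an involution; by the observation its $+1$-eigenspace has dimension equal to the number of two-element $\mu$-orbits, i.e. the number of non-trivial orbits of $\sigma(-\longestparabolic{I})$ on $I$. On $V_I^\perp$ we have $\sigma\longestparabolic{I}=\sigma$, so the remaining term is the dimension of the fixed space of $\sigma$ on $V_I^\perp$; applying the observation to $\sigma$ on the basis $\simpleRoots$ of $V$ and on the basis $I$ of $V_I$ and subtracting shows this is the number of $\sigma$-orbits on $\simpleRoots\setminus I$. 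Adding the two contributions gives the stated formula.

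Part (2) is then the case $\sigma=-\longest$ of part (3): $-\longest$ is an involutive isometry with $-\longest(\simpleRoots)=\simpleRoots$, the hypothesis supplies $-\longest(I)=I$, and it remains to see that the restrictions of $-\longest$ and $-\longestparabolic{I}$ to $I$ commute, equivalently that $\longest$ and $\longestparabolic{I}$ commute, i.e. $\longestparabolic{I}\in\Weylo$. This holds because $-\longest(I)=I$ forces $\longest\Weylparabolic{I}\longest^{-1}=\Weylparabolic{(-\longest)(I)}=\Weylparabolic{I}$ with the standard Coxeter generators matched up, so conjugation by $\longest$ is an automorphism of the Coxeter system $\Weylparabolic{I}$ and hence fixes its longest element $\longestparabolic{I}$; one then checks $\longest\longestparabolic{I}=(-\longest)(-\longestparabolic{I})$ acts on $I$ as the permutation $(-\longest)|_I\circ\tau$, so that ``non-trivial orbits of $\sigma(-\longestparabolic{I})$ on $I$'' and ``orbits of $\sigma$ on $\simpleRoots\setminus I$'' specialise verbatim to the statement of (2). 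The only genuinely fiddly part of the whole argument is this translation between permutations of the node set and the corresponding (signed) permutation matrices on the coordinate subspaces $V_I$, $V_I^\perp$ and $V$; once that dictionary is set up, every dimension count is a direct application of the opening observation.
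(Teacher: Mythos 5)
Your argument is correct, and for parts (1) and (3) it follows essentially the same route as the paper: decompose \(V = \RR I \perp (\RR I)^\perp\), note that \(\Weylparabolic{I}\) acts trivially on \((\RR I)^\perp\) while \(\sigma\) preserves both summands, and count orbits of (signed) permutations of the bases \(I\) and \(\simpleRoots\); your subtraction trick for the fixed space of \(\sigma\) on \((\RR I)^\perp\) is a perfectly good substitute for the paper's description of that space in terms of fundamental weights, and you spell out the orbit count that the paper leaves implicit. Where you genuinely diverge is part (2). The paper verifies the hypothesis of (3) --- that the restrictions of \(-\longest\) and \(-\longestparabolic{I}\) to \(I\) commute --- by noting it is trivial when \(\longest = -\id\) and checking the remaining types case by case, using that \(-\longestparabolic{I}\) acts on each connected component of \(I\) by its canonical diagram involution and that \(-\longest\) maps components isomorphically onto components. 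You instead argue uniformly: since \(-\longest(I)=I\), conjugation by \(\longest\) sends each simple reflection \(s_\alpha\) with \(\alpha\in I\) to the simple reflection in \((-\longest)\alpha\in I\), hence is an automorphism of the Coxeter system \(\Weylparabolic{I}\); it preserves length and therefore fixes the unique longest element \(\longestparabolic{I}\), so \(\longest\) and \(\longestparabolic{I}\) commute, which simultaneously yields \(\longestparabolic{I}\in\Weylo(\simpleRoots)\) and the commutation of the restrictions to \(I\). This is cleaner and type-independent, eliminating the paper's case analysis; the only small caution is your word ``equivalently'' (restrictions commute iff the group elements commute): the nontrivial direction of that equivalence is exactly part (3), but the direction you actually use --- commuting elements give commuting restrictions --- is immediate, so no circularity arises.
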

\begin{proof}
  (1) Decompose the ambient vector space as \(V = \RR I \perp (\RR I)^\perp\).  (Explicitly, \(\RR I\) is the span of the simple roots \(\alpha \in I\), and  \((\RR I)^\perp\) is the span of the fundamental weights \(\omega_\beta\) with \(\beta \in \simpleRoots\setminus I\).)  Any element of \(\Weylparabolic{I}\) maps \(\RR I\) to \(\RR I\) and restricts to the identity on \((\RR I)^\perp\), so the claim follows.

  For (3), note that by assumption \(\sigma\) also maps \(\RR I\) to \(\RR I\), and hence, as an isometry, also maps \((\RR I)^\perp\) to \((\RR I)^\perp\).  The restrictions of \(\sigma\) and \(\longestparabolic{I}\) to \(\RR I\) commute by assumption, and their restrictions to \((\RR I)^\perp\) commute in any case.

  For (2), it suffices to check that \(-\longest\) satisfies all assumptions in (3).  Here, the main point is that the condition \(-\longest I = I\) already implies that \(-\longest\) and \(-\longestparabolic{I}\) commute on \(I\).  This is trivial in the cases when \(\longest = -\id\), listed in \cref{fact:central}.  In the remaining cases,
  % (\(A_n\), \(D_n\) with \(n\) odd, \(E_6\), \(G_2(k)\) with \(k\) odd,
  this can easily be verified on a case-by-case basis.  Note that the action of \(-\longestparabolic{I}\) on a root \(\alpha\in I\) is determined by the connected component of \(I\) that contains \(\alpha\), and that \(-\longest\) maps this component isomorphically to the component containing \(-\longestparabolic{I}\alpha\).
\end{proof}

Part~(1) gives, in particular, the dimensions of eigenspaces of \(\longest = \longestparabolic{\simpleRoots}\).  As we have already noted in \cref{fact:central},  \(\longest\) is central if and only if \(\minusdim(\longest) = \dim V\), and in this case it is itself a standard involution. In the remaining cases, we find that \(\longest\) is conjugate to (one or more) standard involutions as follows:
\newcommand{\hidelastvertex}{\mathllap{\textcolor{white}{\rule{1em}{1ex}}}}
\begin{align*}
  \Weyl(A_n)&\colon \longest = \dynkin A{*****.x}\hidelastvertex \sim \dynkin A{*o*o*.x}\hidelastvertex = c_{\{1,3,5,\dots\}}\quad  (n\geq 2)\\
  \Weyl(D_{2m+1})&\colon \longest = \dynkin D{*****.***} \sim \dynkin D{o****.***} = c_{\{2,3,4,\dots,2m+1\}}\\ %NEW
  \Weyl(E_6)& \colon \longest = \dynkin E{******} \sim \dynkin E{o****o} = c_{\{2,3,4,5\}}\\
  \Weyl(G_2(2m+1)) &  \colon \longest = \dynkin[gonality=k] G{**} \sim \dynkin[gonality={k}] G{*o} = c_{\{1\}} \sim \dynkin[gonality={k}] G{o*} = c_{\{2\}}  \quad (k := 2m+1)
\end{align*}
Indeed, in in each case, there is a unique conjugacy class of involutions \(\tau\) for which \(\minusdim(\tau)\) is maximal.

More generally, we can easily find a standard involution \(\standardinvolution{I}\) conjugate to the involution \(\longestparabolic{H}\) associated with any given subdiagram \(H\subset \simpleRoots\).  Indeed, using the above conjugacy relations, we can find a subdiagram \(I\subset H\) such that  \(\standardinvolution{I} \sim \longestparabolic{H}\) in \(\Weylparabolic{H}\), and then a fortiori \(\standardinvolution{I}\sim\longestparabolic{H}\) in \(\Weyl(\simpleRoots)\).

\subsection{Sub-root systems}
\label{sec:E7E8}
\label{sec:subroots}
For the Borel--de Siebenthal classification of closed sub-root systems, see for example \cite[\S\,12]{kane}. The only other observation we need is:

\begin{observation}\label{obs:subroots}
  The \(+1\)-eigenspaces of a standard involution \(\standardinvolution{I}\) contains a closed sub-root system whose Dynkin graph is the subgraph of \(\simpleRoots\) spanned by all nodes that are not connected by an edge to the nodes in \(I\).
  The \(-1\)-eigenspaces of a standard involution \(\standardinvolution{I}\) contains a closed sub-root system with Dynkin graph~\(I\).
\end{observation}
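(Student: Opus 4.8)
The plan is to reduce the statement to the explicit eigenspace decomposition of a standard involution together with the elementary theory of parabolic sub-root systems. Write $\Phi \subset V$ for the full root system with simple system $\simpleRoots$. The first step is to spell out what standardness gives us: by definition $\standardinvolution{I} = \longestparabolic{I}$ is standard exactly when $\Weylparabolic{I}$ satisfies \cref{fact:central}, which by that fact means $\longestparabolic{I}$ acts as $-\id$ on the subspace on which $\Weylparabolic{I}$ acts, namely $\RR I$. Since $\Weylparabolic{I}$ is generated by the reflections in the roots of $I$, every element of $\Weylparabolic{I}$ fixes $(\RR I)^\perp$ pointwise, so altogether $\standardinvolution{I} = (-\id_{\RR I}) \oplus (\id_{(\RR I)^\perp})$. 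Hence the $-1$-eigenspace of $\standardinvolution{I}$ is exactly $\RR I$ and its $+1$-eigenspace is exactly $(\RR I)^\perp$.

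For the $-1$-eigenspace, I would invoke the standard fact (see e.g.\ \cite[\S\,12]{kane}) that the parabolic sub-root system $\Phi_I := \Phi \cap \RR I$ is a root system with $I$ as a simple system; in particular its Dynkin graph is the subgraph of $\simpleRoots$ induced on $I$, i.e.\ ``$I$''. It is a closed sub-root system because $\alpha,\beta \in \Phi_I$ with $\alpha+\beta \in \Phi$ forces $\alpha+\beta \in \RR I$, hence $\alpha+\beta \in \Phi_I$. And $\Phi_I \subseteq \RR I$ is contained in the $-1$-eigenspace, which proves the second assertion.

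For the $+1$-eigenspace, the first small point is to match the combinatorics with the geometry: two nodes of $\simpleRoots$ are joined by an edge precisely when the corresponding simple roots are not orthogonal, so the subgraph of $\simpleRoots$ spanned by the nodes not joined by an edge to any node of $I$ is exactly $J := \{\beta \in \simpleRoots : \innerproduct{\beta}{\alpha} = 0 \text{ for all } \alpha \in I\}$. Note $J \cap I = \emptyset$ automatically, since no root is orthogonal to itself, so $J$ really is a subgraph disjoint from $I$. Then $\RR J \subseteq (\RR I)^\perp$, and by the same reasoning as above the parabolic sub-root system $\Phi_J := \Phi \cap \RR J$ is a closed sub-root system with simple system, and hence Dynkin graph, equal to $J$; it lies in $(\RR I)^\perp$, the $+1$-eigenspace of $\standardinvolution{I}$, which proves the first assertion.

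There is essentially no genuine obstacle here: the one thing worth stating carefully is the passage between ``not connected by an edge to $I$'' and ``orthogonal to $\RR I$'' (together with the resulting disjointness $J \cap I = \emptyset$); everything else follows immediately from the definition of a standard involution and the elementary properties of parabolic sub-root systems, the latter also making the ``closed'' condition automatic.
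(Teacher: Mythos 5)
Your proof is correct and follows essentially the same route as the paper: identify the \(-1\)-eigenspace with \(\RR I\) and the \(+1\)-eigenspace with \((\RR I)^\perp\) using standardness, and observe that the simple roots orthogonal to \(I\) (equivalently, not joined by an edge to \(I\)) lie in the fixed space. You merely spell out in more detail what the paper leaves implicit, namely that the parabolic sub-root systems \(\Phi\cap\RR I\) and \(\Phi\cap\RR J\) are closed and have the expected Dynkin graphs.
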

\begin{proof}
  The \(-1\)-eigenspace of \(\standardinvolution{I}\) is spanned by the roots in \(I\). The \(+1\)-eigenspace is spanned by the \emph{fundamental weights} corresponding to roots in \(\simpleRoots\setminus I\), and thus does not necessarily contain a sub-root system of type \(\simpleRoots\setminus I\). However, all nodes of \(\simpleRoots\) not connected to \(I\) correspond to simple roots that are orthogonal to the roots in \(I\), and hence lie in the fixed spaces of \(\standardinvolution{I}\).
\end{proof}
For example, for $E_7$, the dimensions of eigenspaces only leave one ambiguity, concerning the lines (4) and (5) in the results below.
By the above observation, the fixed space of the involution on the left in line (5) contains a sub-root system of type \(A_2\) (generated by the simple roots \(\alpha_1\) and \(\alpha_2)\).  Of the involutions on the right of lines (4) and (5), only the involution in line (5) has a \(-1\)-eigenspace that can contain a sub-root system of type \(A_2\).

Similary, for $E_8$, the dimensions of eigenspaces only leave one ambiguity, concerning the lines (5) and (6) in the results below.  By the observation above, both the \(+1\)-eigenspace and the \(-1\)-eigenspace of the involution in line (6) contains a sub-root system of type \(A_2\), but the \(-1\)-eigenspaces of the involutions in line (5) do not.

\subsection{Folding}
\label{sec:folding}
Consider an isometry \(\sigma\) of \((V,\innerproduct{-}{-})\) that maps \(\simpleRoots\) to \(\simpleRoots\).
We do \emph{not} assume that simple roots in the same orbit are necessarily orthogonal.
Steinberg shows that there is a ``folded''  root system \(\simpleRoots^\sigma\) in the fixed point space \(V^\sigma\), with one node for every \(\sigma\)-orbit in \(\simpleRoots\) \cite[1.32\,(b) and surrounding discussion]{steinberg:endomorphisms}.
It comes with a canonical surjection of Dynkin diagrams \(\pi\colon \simpleRoots \twoheadrightarrow \simpleRoots^\sigma\) and an associated inclusion of Weyl groups
\renewcommand{\Weylfolded}{\Weyl(\simpleRoots^\sigma)}
\begin{equation}\label{eq:folding-subgroup}
  \iota\colon \Weylfolded \hookrightarrow \Weyl(\simpleRoots)
\end{equation}
The image of \(\iota\) is precisely the subgroup of elements of \(\Weyl(\simpleRoots)\) that are invariant under conjugation by \(\sigma\).  In particular, for \(\sigma = -\longest\), we have \(\iota \Weylfolded = \Weylo(\simpleRoots)\). Given a subdiagram \(I\subset \simpleRoots\) invariant under \(\sigma\), we write \(\folded{I}\) for the associated ``folded'' subdiagram \(\pi(I)\subset \simpleRoots^\sigma\).

\begin{prop}\label{prop:folding-longest-words}
  Given a subset \(I\subset \simpleRoots\) such that \(\sigma(I) = I\), and such that the restrictions of \(\sigma\) and  \(-\longestparabolic{I}\) to \(I\) agree, we have \(\iota(\longestparabolic{\folded{I}}) = \longestparabolic{I}\).
\end{prop}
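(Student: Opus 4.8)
The plan is to identify $\iota(\longestparabolic{\folded I})$ through its action on the fixed space $V^\sigma$, exploiting the defining feature of Steinberg's embedding that $\iota(\bar w)$ and $\bar w$ induce the same isometry of $V^\sigma$ for every $\bar w\in\Weylfolded$. First I would check that $\longestparabolic I$ lies in the image of $\iota$, i.e.\ that it is invariant under conjugation by $\sigma$. This is immediate: in the $\sigma$-stable orthogonal decomposition $V=\RR I\perp(\RR I)^\perp$ the element $\longestparabolic I$ is the identity on $(\RR I)^\perp$, while on $\RR I$ the hypothesis reads $\sigma=-\longestparabolic I$, so conjugation by $\sigma$ fixes $\longestparabolic I$ on each summand (using $\longestparabolic I^2=\id$ and the centrality of $-\id$). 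Hence $\longestparabolic I=\iota(\tilde w)$ for a unique $\tilde w\in\Weylfolded$, and $\tilde w=\longestparabolic I|_{V^\sigma}$.

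It then remains to show that $\tilde w$ is the longest element $\longestparabolic{\folded I}$ of the parabolic subgroup $\Weyl(\folded\simpleRoots)_{\folded I}$. The geometric input I need from Steinberg's construction is that the folded simple root attached to a $\sigma$-orbit $O\subset\simpleRoots$ is a positive multiple of $v_O:=\sum_{\alpha\in O}\alpha$ (including the non-orthogonal, ``$A_2$-type'' orbits). For an orbit $O\subseteq I$, the hypothesis rewrites as $\longestparabolic I(\alpha)=-\sigma(\alpha)$ for $\alpha\in I$, so
\[
  \tilde w(v_O)\;=\;\longestparabolic I(v_O)\;=\;\sum_{\alpha\in O}\longestparabolic I(\alpha)\;=\;-\sum_{\alpha\in O}\sigma(\alpha)\;=\;-v_{\sigma(O)}\;=\;-v_O ,
\]
so $\tilde w$ acts as $-\id$ on the span $\RR\folded I$ of the folded roots of $\folded I$. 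On the other hand, since $\longestparabolic I$ fixes $(\RR I)^\perp$ pointwise, $\tilde w$ fixes $(\RR I)^\perp\cap V^\sigma$ pointwise; and a dimension count — applying \cref{lem:minusdim}(1) to the sub-root system on $I$ shows that $\dim (\RR I)^\sigma$ equals the number of $\sigma$-orbits in $I$, forcing $\RR\folded I=\RR I\cap V^\sigma$ — identifies $(\RR I)^\perp\cap V^\sigma$ with the orthogonal complement of $\RR\folded I$ inside $V^\sigma$. Thus $\tilde w$ is $-\id$ on $\RR\folded I$ and $\id$ on its complement; recalling that a standard parabolic subgroup is the full pointwise stabiliser of the orthogonal complement of its root span, we get $\tilde w\in\Weyl(\folded\simpleRoots)_{\folded I}$, and acting as $-\id$ on that root span it must be $\longestparabolic{\folded I}$. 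Therefore $\iota(\longestparabolic{\folded I})=\iota(\tilde w)=\longestparabolic I$.

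I expect the main obstacle to be organisational rather than conceptual: it lies in marshalling the facts about Steinberg's folding — the explicit shape of the folded roots, the identity $\iota(\bar w)|_{V^\sigma}=\bar w$, and the matching of $\RR\folded I$ with $\RR I\cap V^\sigma$ — rather than in any single delicate step. An alternative that sidesteps the explicit description of the folded roots argues with chambers instead: $V^\sigma\cap C$ is the dominant chamber of $\folded\simpleRoots$ when $C$ is that of $\simpleRoots$ (because $\innerproduct{v}{\alpha}$ is constant along $\sigma$-orbits for $v\in V^\sigma$), and $\longestparabolic I$ maps the $\Weylparabolic I$-chamber $\{v:\innerproduct{v}{\alpha}\geq 0\text{ for all }\alpha\in I\}$ to its negative; restricting to $V^\sigma$ then shows $\tilde w$ sends the corresponding chamber of $\folded I$ to the opposite one, which again pins down $\tilde w=\longestparabolic{\folded I}$.
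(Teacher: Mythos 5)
Your argument is correct, and while it shares the paper's starting skeleton, it differs genuinely at the decisive identification step. Like the paper, you reduce everything to the action on \(V^\sigma\) (an element of \(\iota\Weylfolded\) is determined by its restriction to \(V^\sigma\), and that restriction is the corresponding folded element) and you use Steinberg's description of the folded simple roots as positive multiples of orbit sums. But the paper then uses only that \(\sigma\) and \(-\longestparabolic{I}\) commute, computes \(-\longestparabolic{I}(a_{[\alpha]}) = a_{[-\longestparabolic{I}\alpha]}\) (invoking reducedness to dispose of the scalar), and appeals to the claim that \(\longestparabolic{\folded{I}}\) is the \emph{unique} element of \(\Weylfolded\) sending \(\folded{I}\) to \(-\folded{I}\); you instead exploit the full strength of the hypothesis (\(\longestparabolic{I}\alpha = -\sigma\alpha\) for \(\alpha\in I\)) to see that the restriction is literally \(-\id\) on \(\RR\folded{I}\) and the identity on the orthogonal complement of \(\RR\folded{I}\) in \(V^\sigma\), and then locate it in \(\Weylparabolic{\folded{I}}\) via the pointwise-stabiliser characterisation of standard parabolics, pinning it down as the longest element by its \(-\id\) action on the root span. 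What each buys: the paper's computation is the one that would survive under the weaker commutation-only hypothesis of \cref{lem:minusdim}(3), whereas your identification step is the more robust one — uniqueness in the \emph{full} group \(\Weylfolded\) of an element with \(w(\folded{I}) = -\folded{I}\) fails in general (in \(BC_2\) both the simple reflection \(s_{\alpha_1}\) and \(-\id\) send \(\{\alpha_1\}\) to \(\{-\alpha_1\}\)), and the missing ingredient is exactly what you supply, namely that the element also fixes the orthogonal complement of \(\RR\folded{I}\) in \(V^\sigma\) pointwise. The same caveat applies to the chamber-based alternative you sketch at the end: sending the \(\folded{I}\)-cone to its negative does not by itself single out \(\longestparabolic{\folded{I}}\) without that extra fixed-space information. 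Your compressed dimension count identifying \((\RR I)^\perp\cap V^\sigma\) with that orthogonal complement is fine as stated; it can also be seen directly, since any \(\sigma\)-fixed vector is automatically orthogonal to the \((-1)\)-eigenspace of \(\sigma\) on \(\RR I\).
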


\begin{proof}
  Observe the following:
  \begin{compactitem}
  \item[(1)] \(\longestparabolic{I}\) is the unique element of \(\Weyl(\simpleRoots)\) satisfying \(\longestparabolic{I}(I) = -I\);\\
    \(\longestparabolic{\folded{I}}\) is the unique elmenent of \(\Weylfolded\) satisfying \(\longestparabolic{\folded{I}}(\folded{I}) = -\folded{I}\).
  \item[(2)] By \cite[1.32(a)]{steinberg:endomorphisms}, it suffices to show that \(\longestparabolic{I}\) agrees with \(\longestparabolic{\folded{I}}\) when restricted to \(V^\sigma\).
  \item [(3)] By assumption, \(\sigma\) and \(\longestparabolic{I}\) commute (compare proof of \cref{lem:minusdim}\,(3)).
  \end{compactitem}
  Combining (1) and (2), we see that it suffices to show that \(\longestparabolic{I}(\folded{I}) = -\folded{I}\).
  Given a simple root \(\alpha\in I\), let us write the \(\sigma\)-orbit of \(\alpha\) as \(\{\alpha, \sigma\alpha, \dots, \sigma^{l-1}\alpha\}\), for some minimal integer~\(l\).
  The associated simple root \(a_{[\alpha]} \in \folded{\simpleRoots}\) is of the form
  \[
    a_{[\alpha]} = r(\alpha + \sigma\alpha + \sigma^2 \alpha + \dots + \sigma^{l-1} \alpha)
  \]
  for some real \(r>0\) \cite[above Corollary~1.30]{steinberg:endomorphisms}.
  As \(-\longestparabolic{I}\) sends \(\sigma\)-orbits to \(\sigma\)-orbits, by (3), we find that
  \begin{align*}
    -\longestparabolic{I}(a_{[\alpha]})
    &= -\longestparabolic{I}(r\alpha + \sigma\alpha + \sigma^2\alpha + \dots + \sigma^{l-1} \alpha) \\
    &= r(-\longestparabolic{I}\alpha + \sigma(-\longestparabolic{I}\alpha) + \sigma^2(-\longestparabolic{I}\alpha) + \dots + \sigma^{l-1}(-\longestparabolic{I}\alpha))\\
    &= \tfrac{r}{s} a_{[-\longestparabolic{I}\alpha]}
  \end{align*}
  for some real \(s>0\).  As a (reduced) root system cannot contain proper multiples of its roots, it follows that
  \[
    -\longestparabolic{I}(a_{[\alpha]}) = a_{[-\longestparabolic{I}\alpha]}.
  \]
  As \(-\longestparabolic{I}(\alpha) \in I\), this shows that \(\longestparabolic{I}(a_{[\alpha]}) \in -\folded{I}\), as desired.
\end{proof}

The following proposition clarifies the relation between parabolic subgroups of \(\Weyl(\simpleRoots)\) and parabolic subgroups of \(\Weylfolded\):
\begin{prop}
  For any subset \(I\subset \simpleRoots\) such that \(\sigma(I) = I\), the embedding \(\iota\) in \eqref{eq:folding-subgroup} identifies the parabolic subgroup \(\Weylparabolic{\folded{I}}\) of \(\Weylfolded\) with  \(\Weylparabolic{I}\cap \iota\Weylfolded\).
\end{prop}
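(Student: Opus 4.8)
The plan is to obtain the statement by running Steinberg's folding construction a \emph{second} time, on the restricted datum. Write $\Phi_I\subset\RR I$ for the sub-root system with simple system $I$; since $\sigma$ is an isometry of $V$ with $\sigma(I)=I$, the pair consisting of $\RR I$ (with the restricted inner product) together with the isometry $\sigma|_{\RR I}$ is again an instance of the setup of \cref{sec:folding}. Everything in that section therefore applies to it: in particular it produces a folded root system $\Phi_I^\sigma$ inside $\RR I\cap V^\sigma$, together with an inclusion $\iota_I\colon\Weyl(\Phi_I^\sigma)\hookrightarrow\Weyl(\Phi_I)=\Weylparabolic{I}$ whose image is the set of elements of $\Weylparabolic{I}$ invariant under conjugation by $\sigma|_{\RR I}$. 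Because $\Weylparabolic{I}$ acts trivially on $(\RR I)^\perp$ and $\sigma$ preserves both $\RR I$ and $(\RR I)^\perp$, conjugation by $\sigma$ on $\Weyl(\simpleRoots)$ restricts to conjugation by $\sigma|_{\RR I}$ on $\Weylparabolic{I}$, so that image is exactly $\Weylparabolic{I}\cap\iota\Weylfolded$. Thus the statement reduces to two claims: (i) $\Weyl(\Phi_I^\sigma)=\Weylparabolic{\folded{I}}$ as subgroups of $\Weylfolded$; and (ii) under this identification, $\iota_I$ coincides with the restriction of $\iota$ to $\Weylparabolic{\folded{I}}$.

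For (i), recall from the proof of \cref{prop:folding-longest-words} that the simple roots of a folded root system are the positive scalar multiples of the orbit sums $\sum_k\sigma^k\alpha$. Hence the simple roots of $\Phi_I^\sigma$ are positive multiples of the vectors $a_{[\alpha]}=\pi(\alpha)$, with $\alpha$ ranging over the $\sigma$-orbits in $I$, and these are precisely the nodes of the subdiagram $\folded{I}\subset\simpleRoots^\sigma$. Since a reflection is unchanged by rescaling the root defining it, $\Weyl(\Phi_I^\sigma)$ and $\Weylparabolic{\folded{I}}$ are generated by the same set of reflections, hence coincide; in particular $\RR\folded{I}=\RR I\cap V^\sigma$. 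I expect this step to be routine.

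The crux is (ii), the compatibility of the two instances of the construction. Both $\iota_I$ and $\iota|_{\Weylparabolic{\folded{I}}}$ take values in $\iota\Weylfolded$, and by \cite[1.32(a)]{steinberg:endomorphisms} an element of $\iota\Weylfolded$ is determined by its restriction to $V^\sigma$; so it is enough to compare restrictions on a fixed $\bar w\in\Weylparabolic{\folded{I}}$. By construction of $\iota$, the element $\iota(\bar w)$ restricts to $\bar w$ on $V^\sigma$. By construction of $\iota_I$, the element $\iota_I(\bar w)\in\Weylparabolic{I}$ restricts to $\bar w$ on $\RR I\cap V^\sigma$, and, lying in $\Weylparabolic{I}$, it restricts to the identity on $(\RR I)^\perp$ and hence on $(\RR I)^\perp\cap V^\sigma$ (this is the decomposition used in the proof of \cref{lem:minusdim}\,(1)). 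Applying that same decomposition inside $V^\sigma$ to the parabolic $\Weylparabolic{\folded{I}}$ of $\Weylfolded$ shows that $\bar w$ itself acts trivially on the orthogonal complement of $\RR\folded{I}=\RR I\cap V^\sigma$ in $V^\sigma$, which equals $(\RR I)^\perp\cap V^\sigma$ since $\sigma$ is an isometry with $\sigma(\RR I)=\RR I$. As $V^\sigma=(\RR I\cap V^\sigma)\perp\bigl((\RR I)^\perp\cap V^\sigma\bigr)$, the two restrictions agree, so $\iota_I(\bar w)=\iota(\bar w)$. Combining, $\iota(\Weylparabolic{\folded{I}})=\iota_I(\Weyl(\Phi_I^\sigma))=\Weylparabolic{I}\cap\iota\Weylfolded$, which is the assertion.

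Besides the injectivity statement \cite[1.32(a)]{steinberg:endomorphisms}, the one external input doing real work is that the folded roots of $\Phi_I$ span all of $\RR I\cap V^\sigma$ (\cite[1.32(b)]{steinberg:endomorphisms} applied to $\Phi_I$); this is what legitimizes the identification $\RR\folded{I}=\RR I\cap V^\sigma$ and the orthogonal-complement bookkeeping in the previous paragraph, and it is the place where one must take a little care that the restricted construction behaves as expected.
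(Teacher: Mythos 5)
Your argument is correct, but it follows a genuinely different route from the paper's. The paper proves the two inclusions separately: the containment \(\iota\Weylparabolic{\folded{I}}\subset\Weylparabolic{I}\cap\iota\Weylfolded\) is checked on generators, citing Steinberg 1.30(b) to see that the reflection in \(a_{[\alpha]}\) maps to the longest element of the parabolic subgroup attached to the \(\sigma\)-orbit of \(\alpha\), which lies in \(\Weylparabolic{I}\); for the converse it takes \(w\) with \(\iota w\in\Weylparabolic{I}\) and shows by a direct inner-product computation (\(\innerproduct{x}{a_{[\alpha]}}=rl\,\innerproduct{x}{\alpha}\) for \(x\in V^\sigma\)) that \(w\) fixes the orthogonal complement of \(\RR\folded{I}\) in \(V^\sigma\) pointwise, implicitly using the standard fact that an element of \(\Weylfolded\) fixing that complement lies in the parabolic \(\Weylparabolic{\folded{I}}\). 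You instead fold a second time: you apply Steinberg's construction and his image characterization to the restricted datum \((\RR I,\Phi_I,\sigma|_{\RR I})\), identify the resulting folded group with \(\Weylparabolic{\folded{I}}\) (same reflections, since the folded simple roots are positive multiples of the same orbit sums), and then check, via restriction to \(V^\sigma\), the decomposition \(V^\sigma=(\RR I\cap V^\sigma)\perp\bigl((\RR I)^\perp\cap V^\sigma\bigr)\) and 1.32(a), that the restricted folding embedding agrees with \(\iota\) on \(\Weylparabolic{\folded{I}}\). This yields both inclusions at once and avoids the pointwise-stabilizer characterization of parabolics that the paper leaves implicit, at the price of verifying that Steinberg's hypotheses apply verbatim to the sub-root system \(\Phi_I\) and that the two folding constructions are compatible — precisely the care you take in your steps (i) and (ii), which I find adequate. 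Both proofs ultimately rest on the same ingredients (orbit sums as folded simple roots, the fact that \(\sigma\)-invariant elements are determined by their restriction to \(V^\sigma\), triviality of \(\Weylparabolic{I}\) on \((\RR I)^\perp\)); yours is more modular and structural, the paper's more explicit at the level of individual roots and reflections.
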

\begin{proof}
  The subgroup \(\Weylparabolic{\folded{I}}\) is generated by the reflections in the roots \(a_{[\alpha]}\) with \(\alpha\in I\).  By \cite[1.30\,(b)]{steinberg:endomorphisms}, these reflections map to the longest element of the parabolic subgroup of \(\Weyl\) corresponding to the \(\sigma\)-orbit of \(\alpha\) in \(I\).  (This is a special case of \cref{prop:folding-longest-words}.) So clearly \(\iota\Weylparabolic{\folded{I}}\subset \Weylparabolic{I} \cap \iota(\Weylfolded)\).

  Conversely, let us consider an arbitrary element \(w\in\Weylfolded\) such that \(\iota w \in \Weylparabolic{I}\). We need to show that \(w\) acts as the identity on any vector \(x\) in the orthogonal complement of \(\RR\{a_{[\alpha]} \mid [\alpha] \in \folded{I}\}\) in \(\folded{V}\).  By construction, the action of \(w\) on \(\folded{V}\) is the restriction to \(\folded{V}\) of the action of \(\iota w\) on \(V\).  Moreover, with notation as in the previous proof, we find that:
  \begin{align*}
    \innerproduct{x}{a_{[\alpha]}}
    &= r\textstyle\sum_{i=1}^l \innerproduct{x}{\sigma^i\alpha}\\
    &= rl \innerproduct{x}{\alpha} && \text{ as } x \in V^\sigma \\%%\text{ as } x\in \folded{I}\\
  \end{align*}
  for some \(r > 0\).
  Thus, the assumption that \(x\in\folded{V}\) is orthogonal to \(a_{[\alpha]}\) is equivalent to the assumption that \(x\) is orthogonal to \(\alpha\).  Thus, \(\iota w \) acts trivially on \(x\).  Altogether, this shows that \(w.x = (\iota w). x = x\), as needed.
\end{proof}

\begin{proof}[Calculations for \(A_n\), \(BC_n\), \(D_{\mathrm{odd}}\), \(E_6\) and \(F_4\)]
  For \(\sigma= -\longest\), we have an isomorphism
  \[
    \iota\colon \Weylfolded \xrightarrow{\cong} \Weylo(\simpleRoots)\subset \Weyl(\simpleRoots).
  \]
  It induces a bijection:
  \[
    \frac{\{\text{involutions in \(\Weylfolded\)}\}}{\text{conjugation}} \xrightarrow{\quad 1:1\quad } \frac{\{\text{involutions in \(\Weylo(\simpleRoots)\)}\}}{\text{conjugation}}
  \]
  By \cref{prop:folding-longest-words}, multiplication with \(-1\) (= the longest element in the folded group \(\Weylfolded\)) on the set on the left corresponds to multiplication with \(\longest\) on the set on the right.  Consider the complete sets of representatives of conjugacy clases listed in \cref{sec:opposite-involutions-results}.  Using \cref{lem:minusdim}, we can easily compute the dimensions of the \(-1\)-eigenspaces of each representative \(w\), its corresponding representative \(\iota w\), and of \(-w\) and \(\iota(-w) = \longest \iota(w)\), see \cref{table:minusdims}.

  The involutions \(\inv{k,l}{}\) are chosen such that \(\inv{k,l}{}\mapsto\inv{k,l}{}\) under the following isomorphisms induced by folding along \(-\longest\):
  \begin{align*}
    \Weyl(BC_n)&\xrightarrow{\cong} \Weylo(A_{2n})\\
    \Weyl(BC_n)&\xrightarrow{\cong} \Weylo(A_{2n-1})\\
    \Weyl(BC_n)&\xrightarrow{\cong} \Weylo(D_{n+1}) \quad \text{ for odd \(n+1\)}
  \end{align*}
  By simultaneously considering the entries for \(BC_n\) and \(A_{2n}\) in the table, we easily see that \(-\inv{k,l}{} \sim \inv{k,n-2k-l}{}\)  in \(\Weyl(BC_n)\) and that \(\longest \inv{k,l}{} \sim \inv{k,n-2k-l}{}\) in \(\Weylo(A_{2n})\).  The results for \(\Weylo(A_{2n-1})\) and \(\Weyl(D_{\mathrm{odd}})\) immediately follow.

  For \(\Weyl(F_4)\) and \(\Weylo(E_6)\), the results are obtained by an analogous argument comparing the dimensions of \(-1\)-eigenspaces of involutions corresponding to each other under the folding isomorphism  \(\Weyl(F_4)\cong\Weylo(E_6)\).
\end{proof}

\begin{table}
  \centering
  \newcolumntype{C}{>{$}c<{$}}
  \newcolumntype{L}{>{$}l<{$}}
  \begin{tabular}{LLCCCL}
    \toprule
    & w                            & \minusdim(w) & \minusdim(\longest w) \\%   & \longest w       \\
    \midrule
    A_{2n}                    & \inv{k,l}{}                  & 2k+l         & n-l                   \\%   & \inv{k,n-2k-l}{} \\
    % A_{2n-1}                 & \inv{k,0}{}                  & 2k           & n                     \\%     & \inv{k,n-2k}{}   \\
    A_{2n-1}                  & \inv{k,l}{}                  & 2k+l         & n-l                   \\%   & \inv{k,n-2k-l}{} \\
    BC_n                      & \inv{k,l}{}                  & k+l          & n-k-l                 \\%   & \inv{k,n-2k-l}{} \\
    % D_{n+1}, n+1 \text{ odd} & \inv{k,0}{}                  & k            & n-k                   \\%   & \inv{k,n-2k}{}   \\
    D_{n+1}, n+1 \text{ odd}  & \inv{k,l}{}, l \text{ even } & k+l          & n-k-l                 \\%     & \inv{k,n-2k-l}{} \\
    D_{n+1}, n+1 \text{ odd}  & \inv{k,l}{}, l \text{ odd }  & k+l+1        & n-k-l+1               \\% & \inv{k,n-2k-l}{} \\
    \bottomrule
  \end{tabular}
  \caption{Dimensions of the \(-1\)-eigenspaces}\label{table:minusdims}
\end{table}

\begin{proof}[Callculations for \(D_{\mathrm{even}}\)]
In this case, \(\longest = -1\), so \(-\longest\) does not define a non-trivial folding.  But we still have an automorphism \(\sigma\) of order \(2\) that commutes with \(\longest\), and that defines a folding embedding  \(\Weyl(BC_n)\hookrightarrow \Weyl(D_{n+1})\).  In this case, the subgroup \(\Weylo(\simpleRoots)\) in the discussion above has to replaced by the subgroup \(\Weyl(\simpleRoots)^\sigma\) of elements fixed under conjugation by \(\sigma\). The arguments go through, because \(\longest\) commutes with \(\sigma\), and hence multiplication by \(\longest\) takes involutions in \(\Weyl(\simpleRoots)^\sigma\) to involultions in \(\Weyl(\simpleRoots)^\sigma\).
All chosen representatives of conjugacy classes of involutions are contained in \(\Weyl(\simpleRoots)^\sigma\), except for the two involutions \(\inv{-}{}\) and \(\inv{+}{}\).
These two exceptional involutions are not conjugate to any other standard involution, nor to each other.  As multiplication with \(\longest\) restricts to the set of \emph{other} conjugacy classes, it must also restrict to the two-element set consisting of these two conjugacy classes. The result presented below is obtained via an explicit computation.
\end{proof}

\newpage
\newgeometry{left=3.5cm,right=3.5cm,top=3cm,bottom=2cm}
\section{Results}
\label{sec:opposite-involutions-results}
\begin{@empty}
  \newcommand{\myrule}{\par\vspace{4pt}\rule{\linewidth}{0.4pt}\vspace{4pt}}
    \parskip0pt
    \parindent0pt
  \newcommand{\arrowlabel}{-1}
  \newcommand{\myarrow}{\quad\xleftrightarrow{\quad \arrowlabel \quad}\quad}
  \newcommand{\mycircarrow}{\quad{\rotatebox{90}{$\circlearrowleft$}}^{\arrowlabel}}
  \newcommand{\class    }[1]{\left\{  #1 \right\}}
  \newcommand{\classes  }[2]{\left\{ \substack{ #1                                                   \\ #2} \right\}}
  \newcommand{\classeses}[2]{\left\{ \substack{ #1                                                   \\ #2\\\\\dots} \right\}}
  \newcommand{\lineno}[1]{(\text{#1})}
  \newenvironment{result}[2]{%
    \par
    \noindent\parbox{3cm}{\framebox{\parbox{2cm}{\centering #1\\#2}}}
    \begin{minipage}{\linewidth-3cm}%
    }{%
    \end{minipage}
    \myrule
  }
  We indicate a subgraph \(I\subset \simpleRoots\) by colouring the nodes of \(I\) black.
  For the families of types \(A\), \(BC\) and \(D\), the longest element \(\longest\) acts on conjugacy classes of certain involutions \(c_{k,l}\) as follows:
  \begin{equation}\label{eq:common-involution-involution}
    \tag{$\dagger$}
    \renewcommand{\arrowlabel}{\longest}
    \class{\substack{\text{involutions conjugate to}\\\inv{k,l}{}} }  \myarrow \class{\substack{\text{involutions conjugate to}\\\inv{k,n-2k-l}{}} }
  \end{equation}
  In each case, the involutions \(\inv{k,l}{}\) are indexed by integers \(l \in \{0, \dots, n\}\) and \(k \in \{0, \dots, \left\lfloor \tfrac{n-l}{2} \right\rfloor\}\).

  \myrule
  \begin{result}{$\Weylo(A_{2n})$}{}
    Each involution is conjugate to precisely one of the involutions \(\inv{k,l}{}\), defined as follows:
    \begin{align*}
      \inv{k,l}{} : ~&
                       \begin{tikzpicture}
                           \dynkin[ply=2,baseline] A{*o*o.o*o*.oo.oo.***.**.***.oo.oo.*o*o.o*o*}
                           \dynkinBrace*[\text{two times } k \text{ marked roots}]{1}{8}
                           \dynkinBrace*[2l \text{ marked roots}]{13}{16}
                         \end{tikzpicture}
    \end{align*}
    The longest element \(\longest\) acts on conjugacy classes as in \eqref{eq:common-involution-involution} above.
  \end{result}
  \begin{result}{$\Weylo(A_{2n-1})$}{\(n\geq 2\)}
    Each involution is conjugate to precisely one of the involutions \(\inv{k,l}{}\), defined as follows:
    \begin{align*}
      \inv{k,l}{} : ~&
                       \begin{tikzpicture}
                         \dynkin[ply=2,baseline] A{*o*o.o*o*.oo.oo.***.***.***.oo.oo.*o*o.o*o*}
                         \dynkinBrace*[\text{two times } k \text{ marked roots}]{1}{8}
                         \dynkinBrace*[\min\{2l-1,0\} \text{ marked roots}]{13}{16}
                       \end{tikzpicture}
    \end{align*}
    The longest element \(\longest\) acts on conjugacy classes as in \eqref{eq:common-involution-involution} above.
  \end{result}
  \begin{result}{$\Weyl(BC_n)$}{}
    Each involution is conjugate to precisely one of the involutions \(\inv{k,l}{}\), defined as follows:
    \begin{align*}
      \inv{k,l}{} : ~&
                           \begin{tikzpicture}
                             \dynkin[arrows=false,baseline] B{*o*o.o*o*.oo.oo.**.***}
                             \dynkinBrace[k \text{ marked roots}]{1}{8}
                             \dynkinBrace[l \text{ marked roots}]{13}{17}
                           \end{tikzpicture}
    \end{align*}
    The element \(\longest = -1\) acts on conjugacy classes as in \eqref{eq:common-involution-involution} above.
  \end{result}

  \begin{result}{$\Weylo(D_{n+1})$}{($n+1$ odd)}
    Each involution is conjugate to precisely one of the involutions \(\inv{k,l}{}\), defined as follows:
    \begin{align*}
      \inv{k,l}{} : ~&
                       \begin{tikzpicture}
                         \dynkin[baseline] D{*o*o.o*o*.oo.oo.**.***}
                         \dynkinBrace[k \text{ marked roots}]{1}{8}
                         \dynkinBrace[\substack{\text{no marked roots if } l = 0\\l + 1\text{ marked roots if } l > 0}]{13}{17}
                       \end{tikzpicture}
    \end{align*}
    The longest element \(\longest\) acts on conjugacy classes as in \eqref{eq:common-involution-involution} above.
  \end{result}

  \begin{result}{$\Weyl(D_{n+1})$}{($n+1$ even)}
    Each involution is conjugate to at least one of the involutions \(\inv{k,l}{}\) defined as in the case when \(n+1\) is odd, or to precisely one of the following two additional involutions:
    \begin{align*}
      \inv{-}{}&\colon  \dynkin D{*o*o*o*.o*o*o*o}\\
      \inv{+}{}&\colon  \dynkin D{*o*o*o*.o*o*oo*}
    \end{align*}
    Here, \(\inv{k,l}{}\) is a standard involution if and only if \(l\) is odd. For even \(l\geq 2\), \(\inv{k,l}{}\sim \inv{k,l-1}{}\).
    The longest element  \(\longest = -1\) acts on the conjugacy classes of the involutions \(\inv{k,l}{}\) as in \eqref{eq:common-involution-involution} above.
    The conjugacy classes of \(\inv{-}{}\) and \(\inv{+}{}\) are fixed by multiplication with \(\longest\) when \(n+1\equiv 0 \mod 4\), and interchanged when \(n+1\equiv 2 \mod 4\).
  \end{result}

  \newpage
  \mbox{}\vfill
  \myrule
  \begin{result}{$\Weylo(E_6)$}{}
  \renewcommand{\arrowlabel}{\longest}
    \begin{flalign*}
      \class  {\dynkin[ply=2,backwards] E{oooooo}}                                   & \myarrow \class  {\dynkin[ply=2,backwards] E{******}} &  & \lineno{1} \\
      \classeses{\dynkin[ply=2,backwards] E{o*oooo}}{\dynkin[ply=2,backwards] E{ooo*oo}} & \myarrow \class  {\dynkin[ply=2,backwards] E{*o****}} &  & \lineno{2} \\
      \classeses{\dynkin[ply=2,backwards] E{*oooo*}}{\dynkin[ply=2,backwards] E{oo*o*o}} & \myarrow \class  {\dynkin[ply=2,backwards] E{o****o}} &  & \lineno{3} \\
      \classeses{\dynkin[ply=2,backwards] E{*oo*o*}}{\dynkin[ply=2,backwards] E{**ooo*}} & \mycircarrow                                         &  & \lineno{4} \\
      \class    {\dynkin[ply=2,backwards] E{oo***o}}                                    & \mycircarrow                                         &  & \lineno{5}
    \end{flalign*}
  \end{result}

  \begin{result}{$\Weyl(E_7)$}{}
    \begin{flalign*}
      \class  {\dynkin E{ooooooo}}                       & \myarrow \class  {\dynkin E{*******}}                     &  & \lineno{1} \\
      \classeses{\dynkin E{*oooooo}}{\dynkin E{oo*oooo}} & \myarrow \class  {\dynkin E{o******}}                     &  & \lineno{2} \\
      \classeses{\dynkin E{*oo*ooo}}{\dynkin E{*ooo*oo}} & \myarrow \class  {\dynkin E{o****o*}}                     &  & \lineno{3} \\
      \classeses{\dynkin E{*oo*o*o}}{\dynkin E{**oooo*}} & \myarrow \classes{\dynkin E{**oo*o*}}{\dynkin E{o**o*o*}} &  & \lineno{4} \\
      \class  {\dynkin E{o*oo*o*}}                       & \myarrow \class  {\dynkin E{o****oo}}                     &  & \lineno{5}
    \end{flalign*}
  \end{result}
  \vfill
  \newpage
  \mbox{}\vfill
  \myrule
  \begin{result}{$\Weyl(E_8)$}{}
    \begin{flalign*}
      \class    {\dynkin E{oooooooo}} & \myarrow \class {\dynkin E{********}}       &&    \lineno{1} \\
      \classeses{\dynkin E{*ooooooo}}{\dynkin E{oo*ooooo}} & \myarrow \class  {\dynkin E{*******o}}       &&    \lineno{2} \\
      \classeses{\dynkin E{*oo*oooo}}{\dynkin E{*ooo*ooo}} & \myarrow \class  {\dynkin E{o******o}}       &&    \lineno{3} \\
      \classeses{\dynkin E{*oo*o*oo}}{\dynkin E{**ooooo*}} & \myarrow \classes{\dynkin E{**oo*o*}}{\dynkin E{o**o*o*}}   &&    \lineno{4} \\
      \classeses{\dynkin E{*oo*o*o*}}{\dynkin E{**ooo*o*}} & \mycircarrow        &&    \lineno{5} \\
      \class  {\dynkin E{o****ooo}}  & \mycircarrow        &&    \lineno{6}
    \end{flalign*}
  \end{result}
  \vfill
  \newpage
  \mbox{}\vfill
  \myrule
  \begin{result}{$\Weyl(F_4)$}{}
    \begin{flalign*}
      \class {\dynkin F{oooo}}    & \myarrow \class  {\dynkin F{****}}   &&    \lineno{1} \\
      \classes {\dynkin F{*ooo}}{\dynkin F{o*oo}} & \myarrow \class  {\dynkin F{o***}}   &&    \lineno{2} \\
      \classes {\dynkin F{oo*o}}{\dynkin F{ooo*}} & \myarrow \class  {\dynkin F{***o}}   &&    \lineno{3} \\
      \classeses{\dynkin F{*o*o}}{\dynkin F{*oo*}} & \mycircarrow   &&    \lineno{4} \\
      \class {\dynkin F{o**o}}    & \mycircarrow   &&    \lineno{5}
    \end{flalign*}
  \end{result} 

  \begin{result}{$\Weyl(G_2(n))$}{(\(n\equiv 0 \mod 4\))}
    \begin{flalign*}
      \class    {\dynkin[gonality=n] G{oo}} & \myarrow \class  {\dynkin[gonality=n] G{**}}   &&    \lineno{1} \\
      \class    {\dynkin[gonality=n] G{*o}} & \myarrow \class  {\dynkin[gonality=n] G{o*}}   &&    \lineno{2}
    \end{flalign*}
  \end{result}

  \begin{result}{$\Weyl(G_2(n))$}{(\(n\equiv 2 \mod 4\))}
    \begin{flalign*}
      \class    {\dynkin[gonality=n] G{oo}} & \myarrow \class  {\dynkin[gonality=n] G{**}} &  & \lineno{1} \\
      \class    {\dynkin[gonality=n] G{*o}} & \mycircarrow                                 &  & \lineno{2} \\
      \class    {\dynkin[gonality=n] G{o*}} & \mycircarrow                                 &  & \lineno{3}
    \end{flalign*}
    An alternative common notation for \(G_2(n)\) is \(I_2(n)\).\\
    The Weyl group \(G_2\) is \(G_2 := G_2(6)\).
  \end{result}

  \begin{result}{$\Weylo(G_2(n))$}{(\(n\) odd)}
    \renewcommand{\arrowlabel}{\longest}
    \begin{flalign*}
      \class    {\dynkin[gonality=n] G{oo}} & \myarrow \class  {\dynkin[gonality=n] G{**}}   &&    \lineno{1}
    \end{flalign*}
    An alternative common notation for \(G_2(n)\) is \(I_2(n)\).
    Note that $\Weylo(G_2(n)) = \{\id,\longest\} \cong \Weyl(A_1)$.
  \end{result}

  \begin{result}{$\Weyl(H_3)$}{}
    \begin{flalign*}
       \class {\dynkin H{ooo}} &\myarrow \class {\dynkin H{***}} & & \lineno{1}\\
      \classeses {\dynkin H{*oo}}{\dynkin H{o*o}} & \myarrow \class {\dynkin H{*o*}} & & \lineno{2}
    \end{flalign*}
  \end{result}

  \begin{result}{$\Weyl(H_4)$}{}
    \begin{flalign*}
      \class {\dynkin H{oooo}} & \myarrow \class {\dynkin H{****}} && \lineno{1} \\
      \classeses {\dynkin H{*ooo}}{\dynkin H{o*oo}} & \myarrow \class {\dynkin H{***o}} && \lineno{2} \\
      \classeses {\dynkin H{*o*o}}{\dynkin H{*oo*}} & \mycircarrow&&\lineno{3}
    \end{flalign*}
  \end{result}
\end{@empty}
\vfill
\restoregeometry
\newpage
\bibliographystyle{alpha}
\bibliography{main}

\end{document}

%%% Local Variables:
%%% TeX-command-extra-options: "-shell-escape" 
%%% End: